\numberwithin{equation}{section}
\theoremstyle{plain}
\newtheorem{thm}{Theorem}[section]
\newtheorem{lem}[thm]{Lemma}
\newtheorem{cor}[thm]{Corollary}
\newtheorem{pro}[thm]{Proposition}
\theoremstyle{definition}
\newcommand{\bp}{\mathbf{p}}
\newcommand{\bq}{\mathbf{q}}
\newcommand{\bu}{\mathbf{u}}
\newcommand{\bv}{\mathbf{v}}
\newcommand{\bw}{\mathbf{w}}
\begin{document}
\title[The Continuum of varieties]
{The Interval $[\mathsf{V}(S_7),\mathsf{V}(B_2^1)]$ of Semiring Varieties Has the Cardinality of the Continuum}

\author[Z. D. Gao]{Zi Dong Gao}
\address{School of Mathematics\\
Northwest University\\Xi'an 710127\\Shaanxi\\P.R. China}
\email{zidonggao@yeah.net}

\author[M. M. Ren]{Miao Miao Ren}
\address{School of Mathematics\\
Northwest University\\Xi'an 710127\\Shaanxi\\P.R. China}
\urladdr{https://math.nwu.edu.cn/info/1269/6520.htm}
\email{miaomiaoren@yeah.net}

\author[M. Y. Yue]{Meng Ya Yue}
\address{School of Mathematics\\
Northwest University\\Xi'an 710127\\Shaanxi\\P.R. China}
\email{myayue@yeah.net}

\subjclass{16Y60, 03C05, 08B15}

\keywords{Semiring, Variety, Subvariety lattice}

\begin{abstract}
We prove that the interval $[\mathsf{V}(S_7),\mathsf{V}(B_2^1)]$
in the lattice of additively idempotent semiring (ai-semiring) varieties has the cardinality of the continuum,
where $S_7$ is the smallest nonfinitely based ai-semiring (a three-element algebra),
and $B_2^1$ is the ai-semiring whose multiplicative reduct is the six-element Brandt monoid.
\end{abstract}

\maketitle

\section{Introduction}\label{sec:intro}
A \emph{variety} is a class of algebras closed under taking subalgebras, homomorphic images, and arbitrary direct products.
By Birkhoff's theorem, a class of algebras is a variety if and only if it is an \emph{equational class},
that is, it consists of all algebras satisfying a certain set of identities.

For a class $\mathcal{K}$ of algebras,
we write $\mathsf{V}(\mathcal{K})$ for the variety generated by $\mathcal{K}$,
that is, the smallest variety containing $K$.
In particular, $\mathsf{V}(A)$ denotes the variety generated by a single algebra $A$.
A variety is \emph{finitely based} if it can be defined by a finite set of identities;
otherwise, it is \emph{nonfinitely based}.
Correspondingly,
an algebra $A$ is finitely based or nonfinitely based if the variety $\mathsf{V}(A)$
is finitely based or not.
The finite basis problem for a class of algebras,
one of the most important problems in universal algebra,
concerns the classification of its members according to whether they are finitely based.

For a variety $\mathcal{V}$,
the collection of all its subvarieties forms a complete lattice with respect to inclusion;
we denote it by $\mathcal{L}(\mathcal{V})$.
If $\mathcal{W}$ is a subvariety of $\mathcal{V}$,
then the interval $[\mathcal{W},\mathcal{V}]$, consisting of all varieties between $\mathcal{W}$ and $\mathcal{V}$,
is a complete sublattice of $\mathcal{L}(\mathcal{V})$.

Understanding the structure of the subvariety lattice $\mathcal{L}(\mathcal{V})$ is a fundamental problem in the study of varieties.
A central aspect of this problem is determining the size of intervals within the lattice.
Such descriptions reveal not only the internal complexity of $\mathcal{V}$,
but also often reflect important algebraic and logical properties of the algebras it contains.

An \emph{additively idempotent semiring} (ai-semiring for short)
is an algebra $(S, +, \cdot)$ equipped with two binary operations $+$ and $\cdot$
such that the additive reduct $(S, +)$ is a commutative idempotent semigroup,
the multiplicative reduct $(S, \cdot)$ is a semigroup and the distributive laws
\[
x(y+z)\approx xy+xz,\quad (x+y)z\approx xz+yz
\]
hold.
Such algebras are ubiquitous in mathematics and find applications in diverse areas such as
algebraic geometry~\cite{cc}, tropical geometry~\cite{ms}, information science~\cite{gl}, and theoretical computer science~\cite{go}.

Let $S$ be an ai-semiring. Then the relation $\leq$ on $S$ defined by
\[
a \leq b \Leftrightarrow a+b=b
\]
is a partial order such that $(S, \leq)$ forms an upper semilattice, where the supremum of any two elements $a$ and $b$ is $a+b$.
Consequently, the additive reduct $(S, +)$ is uniquely determined by this semilattice order.
It is therefore often convenient to visualize the addition via the Hasse diagram of $(S, \leq)$.
Moreover, the order $\leq$ is readily seen to be compatible with multiplication,
which explains such an algebra is also termed a \emph{semilattice-ordered semigroup}.

Over the past two decades, the finite basis problem and other variety-theoretic properties
for ai-semirings have been intensively studied and well developed
(see~\cite{dol07, dol09, gjrz, gpz, jrz, pas05, rlyc, rlzc, rjzl, rzv, rz16, rzw, sr, vol21, yrzs, zrc}).
In particular,
Pastijn et al.~\cite{gpz, pas05} established that there are precisely $78$ ai-semiring
varieties satisfying the identity $x^2\approx x$, all of which are finitely based.
Ren et al.~\cite{rz16, rzw} later proved that there are precisely $179$ ai-semiring
varieties satisfying the identity $x^3\approx x$, which are likewise all finitely based.
Recently, Volkov et al.\cite{rzv} showed that for any integer $n\geq 4$,
there are $2^{\aleph_0}$ distinct varieties satisfying $x^n \approx x$,
most of which are nonfinitely based.

\begin{table}[ht]
\caption{The Cayley tables of $S_7$} \label{tb24111401}
\begin{tabular}{c|ccc}
$+$      &$0$&$a$&$1$\\
\hline
$0$ &$0$&$0$&$0$\\
$a$      &$0$&$a$&$0$\\
$1$      &$0$&$0$&$1$\\
\end{tabular}\qquad
\begin{tabular}{c|ccc}
$\cdot$  &$0$&$a$&$1$\\
\hline
$0$      &$0$&$0$&$0$\\
$a$      &$0$&$0$&$a$\\
$1$      &$0$&$a$&$1$\\
\end{tabular}
\end{table}

On the other hand,
Zhao et al.~\cite{zrc} showed that all ai-semirings of order at most three are finitely based,
with the possible exception of the semiring $S_7$; see Table~\ref{tb24111401} for its Cayley tables.
Jackson et al.~\cite{jrz} later established that $S_7$ itself is nonfinitely based.
In fact, they showed that $S_7$ can transmit this property to many other finite ai-semirings.
Such examples include finite flat semirings whose varieties contain $S_7$,
and the ai-semiring $B^1_2$ whose multiplicative reduct is the six-element Brandt monoid.

Recall that the elements of $B^1_2$ are the matrices
\[
\begin{array}{cccccc}
\begin{pmatrix}
0 & 0 \\
0 & 0
\end{pmatrix} &
\begin{pmatrix}
1 & 0 \\
0 & 1
\end{pmatrix} &
\begin{pmatrix}
0 & 1 \\
0 & 0
\end{pmatrix} &
\begin{pmatrix}
0 & 0 \\
1 & 0
\end{pmatrix} &
\begin{pmatrix}
1 & 0 \\
0 & 0
\end{pmatrix} &
\begin{pmatrix}
0 & 0 \\
0 & 1
\end{pmatrix} \\
0 & 1 & a & b & ab & ba
\end{array}
\]
with multiplication given by ordinary matrix multiplication.
Its additive reduct is determined by the Hasse diagram in Figure~\ref{fig1}.
Perkins~\cite{perkins1969} showed that the multiplicative reduct of $B^1_2$ is nonfinitely based;
indeed, it was the first example of a nonfinitely based finite semigroup,
and is sometimes referred to as Perkins' semigroup.

One can readily verify that $B^1_2$ contains a copy of $S_7$;
hence $\mathsf{V}(S_7)$ is a subvariety of $\mathsf{V}(B^1_2)$,
and the interval $[\mathsf{V}(S_7), \mathsf{V}(B_2^1)]$ is well-defined.
We remark that Volkov~\cite{vol21} independently resolved the finite basis problem for $B^1_2$ by a different method.

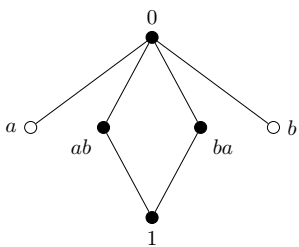
\begin{figure}[h]
\centering
\scalebox{0.8}{%
\begin{tikzpicture}[
    node distance=1.5cm and 1.5cm,
    solidnode/.style={circle, draw=black, fill=black, inner sep=2pt, minimum size=4pt},
    hollownode/.style={circle, draw=black, fill=white, inner sep=2pt, minimum size=4pt}
]

\node[solidnode, label=below:1] (one) at (0,0.5) {};

\node[hollownode, label=left:{$a$}] (e12) at (-2,2) {};
\node[solidnode, label=below left:{$ab$}] (e11) at (-0.8,2) {};
\node[solidnode, label=below right:{$ba$}] (e22) at (0.8,2) {};
\node[hollownode, label=right:{$b$}] (e21) at (2,2) {};

\node[solidnode, label=above:0] (zero) at (0,3.5) {};

\draw (one) -- (e11);
\draw (one) -- (e22);

\draw (e12) -- (zero);
\draw (e11) -- (zero);
\draw (e22) -- (zero);
\draw (e21) -- (zero);

\end{tikzpicture}%
}
\caption{The additive order of $B_2^1$}
\label{fig1}
\end{figure}

Over recent years,
the finite ai-semirings $S_7$ and $B_2^1$ have become pivotal objects in the study of ai-semiring varieties.
Gusev and Volkov~\cite{gv2301, gv2302, gv2501} identified mild conditions
under which a finite ai-semiring whose variety contains $B_2^1$ is nonfinitely based.
Gao et al.~\cite{gjrz} proved that $\mathsf{V}(S_7)$ itself contains $2^{\aleph_0}$ distinct subvarieties,
of which only six are finitely based.
More recently, they~\cite{gjrz2} further proved
that every variety in the interval $[\mathsf{V}(S_7), \mathsf{V}(B_2^1)]$ is nonfinitely based.

These developments are situated within a broader historical context.
Dolinka~\cite{dol08} previously showed that the interval
$[\mathsf{V}(\Sigma_7), \mathsf{V}(\mathbf{Rel}(2))]$ also has cardinality $2^{\aleph_0}$,
where $\Sigma_7$ is the $7$-element ai-semiring obtained from $B_2^1$ by adding a new element
that serves as both the additive least element and the multiplicative zero element,
and $\mathbf{Rel}(2)$ is the semiring of all binary relations on a $2$-element set.
To the best of our knowledge,
$\mathbf{Rel}(2)$ is the first example of a finite ai-semiring whose variety has $2^{\aleph_0}$ distinct subvarieties,
and $\Sigma_7$ itself was the first example of a nonfinitely based finite ai-semiring (see~\cite{dol07}).
Note that the interval $[\mathsf{V}(B_2^1), \mathsf{V}(\Sigma_7)]$ also has cardinality $2^{\aleph_0}$ (see~\cite{gjrz2}).

These results collectively paint a picture of intricate complexity within ai-semiring varieties.
However, a fundamental question concerning the interval $[\mathsf{V}(S_7), \mathsf{V}(B_2^1)]$
has remained unanswered: while all its members are known to be nonfinitely based,
what is its precise cardinality?
Specifically, does it also attain the maximal possible cardinality of $2^{\aleph_0}$,
or is it strictly smaller?
This paper addresses this open question. Our main contribution is the following theorem.

\begin{thm}\label{maintheorem}
The interval $[\mathsf{V}(S_7),\mathsf{V}(B_2^1)]$ has cardinality $2^{\aleph_0}$.
\end{thm}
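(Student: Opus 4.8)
The plan is to exhibit an injection from the power set $\mathcal{P}(\N)$ into the interval; this yields the lower bound $2^{\aleph_0}$, while the upper bound is automatic because the ai-semiring signature is finite, so there are at most $2^{\aleph_0}$ equational theories in total. Everything hinges on producing a countable family of ai-semiring identities $\sigma_0,\sigma_1,\sigma_2,\dots$ that is \emph{independent relative to} $B_2^1$, in the sense that (a) each $\sigma_n$ holds in $S_7$, and (b) for every $m$ there is a finite ai-semiring $C_m\in\mathsf{V}(B_2^1)$ that satisfies $\sigma_n$ for all $n\neq m$ but refutes $\sigma_m$. Granting such a family, for each $I\subseteq\N$ I would set $\mathcal{V}_I=\mathsf{V}(B_2^1)\cap\mathrm{Mod}(\{\sigma_n:n\in I\})$. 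Since every $\sigma_n$ holds in $S_7$ and $S_7\in\mathsf{V}(B_2^1)$, each $\mathcal{V}_I$ satisfies $\mathsf{V}(S_7)\subseteq\mathcal{V}_I\subseteq\mathsf{V}(B_2^1)$ and so lies in the interval. Distinctness is then formal: if $m\in I\setminus J$, then $\mathcal{V}_I$ satisfies $\sigma_m$, whereas $C_m\in\mathcal{V}_J$ (because $C_m\in\mathsf{V}(B_2^1)$ and $m\notin J$) refutes $\sigma_m$, so $\mathcal{V}_I\neq\mathcal{V}_J$. Hence $I\mapsto\mathcal{V}_I$ is injective and the interval has cardinality exactly $2^{\aleph_0}$.

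The substantive work is the construction and verification of the family $\{\sigma_n\}$ and the witnesses $\{C_m\}$. I would build the $\sigma_n$ from words that are \emph{isoterms} for $S_7$, exploiting the rigidity of evaluation in this three-element algebra: since $a$ is nilpotent ($a^2=0$) while $0$ is simultaneously the multiplicative zero and the additive top, a sum of words collapses to the top unless each summand avoids the annihilating factors, which sharply constrains which identities $S_7$ can satisfy. The identities $\sigma_n$ would equate two sums of words that are indistinguishable under every such rigid $S_7$-evaluation --- hence valid in $S_7$ --- but that differ at a prescribed ``level $n$'' detectable by matrix multiplication in $B_2^1$. A natural shape for the underlying words is an alternating block pattern such as $x\,y_1\,x\,y_2\cdots y_n\,x$, whose nonzero values in $B_2^1$ depend delicately on the arrangement of the linking variables; the two sides of $\sigma_n$ would arise by permuting or relinking a single block at level $n$.

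The main obstacle, I expect, is producing each separating algebra $C_m$ \emph{inside} $\mathsf{V}(B_2^1)$. Because $B_2^1$ is nonfinitely based, membership $C_m\in\mathsf{V}(B_2^1)$ cannot be certified by checking finitely many defining identities; instead I would realize $C_m$ explicitly within the variety, either as a subalgebra of a finite direct power of $B_2^1$ or as a homomorphic image of such, so that membership follows from closure under $\mathbb{H}$, $\mathbb{S}$, and $\mathbb{P}$. Each $C_m$ must then be engineered as a kind of ``truncation'': words whose complexity differs from $m$ should collapse into the absorbing part of $C_m$, forcing $\sigma_n$ to hold trivially for every $n\neq m$, while the level-$m$ data survives and makes $\sigma_m$ fail. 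Meeting these two opposing demands simultaneously --- enough collapse to kill all $\sigma_n$ with $n\neq m$, enough memory to refute $\sigma_m$ --- is the delicate heart of the argument.

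The remaining verification reduces to a word-combinatorial analysis of products in $B_2^1$: characterizing exactly when a product of the matrix generators $a,b,ab,ba$ is nonzero and what it equals, translating each $\sigma_n$ into a statement about the multisets of surviving alternating subwords, and then confirming (i) the $S_7$-validity of every $\sigma_n$, (ii) the validity of $\sigma_n$ in $C_m$ for all $n\neq m$, and (iii) the failure of $\sigma_m$ in $C_m$. I anticipate that (ii) and (iii) --- calibrating the truncation level of $C_m$ precisely against the combinatorial complexity of the defining words --- will demand the most care. Once the family $\{\sigma_n\}$ and the witnesses $\{C_m\}$ are in hand with these properties, the cardinality conclusion follows exactly as set out in the first paragraph.
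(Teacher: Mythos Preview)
Your high-level framework is exactly the paper's: find identities $\sigma_n$ valid in $S_7$ and witnesses $C_m\in\mathsf{V}(B_2^1)$ with $C_m\models\sigma_n\iff n\neq m$, then invoke the standard power-set embedding (the paper's Proposition~1.3). Where your proposal remains genuinely incomplete is precisely at the point you flag as the main obstacle, namely certifying $C_m\in\mathsf{V}(B_2^1)$, and the paper resolves this with a tool you do not mention: for any word $\mathbf{w}$, the flat semiring $S(\mathbf{w})$ lies in $\mathsf{V}(S)$ if and only if $\mathbf{w}$ is an isoterm for $S$ (Lemma~2.1, from \cite{rjzl}). This converts the membership problem into a purely syntactic one --- showing that a specific word $\mathbf{w}_m$ is an isoterm for $B_2^1$ --- and avoids any ad hoc realization of $C_m$ inside a direct power.

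The paper's concrete choices are also simpler than your speculations. The identities $\sigma_m$ are just $\mathbf{w}_m\approx\mathbf{w}_m'$ where $\mathbf{w}_m'$ swaps two linear letters of $\mathbf{w}_m$; these hold in $S_7$ for the trivial reason that $S_7$ is multiplicatively commutative, so no analysis of ``rigid $S_7$-evaluations'' or sums of words is needed. The words $\mathbf{w}_n$ (due to Sapir--Volkov) have every letter occurring at most twice, so your proposed pattern $xy_1xy_2\cdots y_nx$ with a single letter repeated $n{+}1$ times goes in the wrong direction. The witness $C_m$ is simply $S_7\times S(\mathbf{w}_m)$: the flat semiring $S(\mathbf{w}_m)$ satisfies $\sigma_n$ for $n\neq m$ because $\mathbf{w}_m$ is $\mathbf{w}_n$-free (so both sides evaluate to $0$), and refutes $\sigma_m$ tautologically. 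The only substantive verification left is that each $\mathbf{w}_n$ is an isoterm for $B_2^1$, which the paper reduces to the single word $xyxztz$ being an isoterm (Proposition~3.4 and Corollary~3.5). So your plan is sound in outline, but the execution hinges on the isoterm/flat-semiring correspondence, which is the missing idea.
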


For an ai-semiring $S$,
we denote by $\mathsf{V}_s(S)$ the semigroup variety generated by the multiplicative semigroup reduct of $S$.
We obtain a parallel result for semigroup varieties.

\begin{thm}\label{thm26010401}
The interval \([\mathsf{V}_s(S_7),\mathsf{V}_s(B_2^1)]\) has cardinality $2^{\aleph_0}$.
\end{thm}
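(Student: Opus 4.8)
The plan is to realize the interval as an order-anti-isomorphic copy of an interval of semigroup equational theories and to populate it with continuum many members by exhibiting a countably infinite, independent family of \emph{word} identities lying strictly between the theories of $\mathsf{V}_s(B_2^1)$ and $\mathsf{V}_s(S_7)$. First I would pin down the two endpoints syntactically. From Table~\ref{tb24111401}, the multiplicative reduct of $S_7$ is the three-element commutative monoid $\{1,a,0\}$ with $a^2=0$; hence $\mathsf{V}_s(S_7)$ is a commutative variety satisfying $x^2\approx x^3$, and in particular it satisfies every balanced identity $\bu\approx\bv$ whose two sides are equal in the free commutative monoid (after the collapse $x^2=x^3$). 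For the top endpoint I would use the matrix description of $B_2^1$ given above: a product of the units $e_{11},e_{12},e_{21},e_{22}$ (with $0$ and $1$ adjoined) is nonzero exactly when the non-identity letters trace a consistent path in the two-vertex graph with loops, so that $B_2^1\models\bu\approx\bv$ reduces to a purely combinatorial comparison, over all deletions of variables and all consistent two-colourings, of the path endpoints read off from $\bu$ and $\bv$.

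Next I would build the separating family. For each $n$ I would choose a pair $\bu_n\approx\bv_n$ of words that are equal in the free commutative monoid -- so that both sides collapse and the identity holds throughout $\mathsf{V}_s(S_7)$ -- but whose non-commutative adjacency pattern is detected by $B_2^1$; the prototype is $xyxy\approx x^2y^2$, which holds in every commutative semigroup yet separates $e_{11}$ from $0$ under $x\mapsto e_{12}$, $y\mapsto e_{21}$. The family must be arranged so that the $n$-th identity encodes a length-$n$ (equivalently, $n$-variable) pattern not recoverable from the others under the semigroup deduction rules. Writing $\Sigma=\mathrm{Id}(\mathsf{V}_s(B_2^1))$, for each $X\subseteq\N$ I set $\mathcal{W}_X=\mathrm{Mod}\bigl(\Sigma\cup\{\bu_n\approx\bv_n:n\in X\}\bigr)$. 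Then $\mathcal{W}_X\subseteq\mathsf{V}_s(B_2^1)$ since it models $\Sigma$, while $\mathsf{V}_s(S_7)\subseteq\mathcal{W}_X$ because each $\bu_n\approx\bv_n$ holds in $\mathsf{V}_s(S_7)$ and $\mathsf{V}_s(S_7)\subseteq\mathsf{V}_s(B_2^1)$; hence every $\mathcal{W}_X$ lies in the interval.

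The crux, and the step I expect to be the main obstacle, is the \emph{independence} of the family over $\Sigma$: for each $n$ one must produce a semigroup $T_n\in\mathsf{V}_s(B_2^1)$ satisfying $\bu_m\approx\bv_m$ for all $m\neq n$ but violating $\bu_n\approx\bv_n$. Granting these witnesses, if $n\in X\setminus Y$ then $T_n\in\mathcal{W}_Y$ yet $T_n\notin\mathcal{W}_X$, so $X\mapsto\mathcal{W}_X$ is injective and the interval has at least $2^{\aleph_0}$ members; the reverse inequality is automatic, as there are only continuum many semigroup varieties in a countable language. Constructing the $T_n$ inside $\mathsf{V}_s(B_2^1)$ is delicate precisely because $B_2^1$ is nonfinitely based (Perkins): I would try to obtain them as Perkins-style word semigroups -- Rees quotients of a free semigroup by the ideal of words that are not factors of a prescribed finite word set, with a zero adjoined -- tuned to see the length-$n$ pattern but no shorter one, and I would verify both their membership in $\mathsf{V}_s(B_2^1)$ and the required (non)identities through the adjacency criterion above. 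Controlling membership in the variety while still detecting the intended pattern is the principal technical difficulty.

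A complementary route, viable if the separating identities underlying Theorem~\ref{maintheorem} can be taken to be word identities, is to transfer the continuum directly. Taking multiplicative reducts maps $[\mathsf{V}(S_7),\mathsf{V}(B_2^1)]$ into $[\mathsf{V}_s(S_7),\mathsf{V}_s(B_2^1)]$, and a semiring variety satisfies a word identity if and only if its multiplicative reduct does; hence any family of semiring varieties separated by word identities projects to an equally large family of pairwise distinct semigroup varieties in the interval. In that case the work reduces to checking that the separation secured in Theorem~\ref{maintheorem} does not depend essentially on the additive operation, after which Theorem~\ref{thm26010401} follows at once.
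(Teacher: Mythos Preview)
Your proposal is correct, and your second route is exactly the paper's argument: the separating identities in Theorem~\ref{maintheorem} are the word identities $\bw_m\approx\bw_m'$ (balanced, hence valid in the commutative multiplicative reduct of $S_7$), the witnesses are the multiplicative reducts of $S_7\times S(\bw_n)$, and membership of $S(\bw_n)$ in $\mathsf{V}_s(B_2^1)$ follows from the semigroup analogue of Lemma~\ref{isosw} (Sapir's Lemma~3.3.32) once $\bw_n$ is known to be an isoterm for $B_2^1$. Your first route converges to the same construction, with $T_n=S(\bw_n)$ serving precisely as the Perkins-style word semigroup you anticipate and the ``delicate'' membership check resolved by the isoterm criterion.
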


The following proposition encapsulates a standard method widely used in the literature to
prove that a variety has $2^{\aleph_0}$ distinct subvarieties (see~\cite{dol08, jac:uncoutably, jaclee, gla, gus, rjzl, tra}).
While the underlying idea is frequently applied implicitly to construct large intervals in subvariety lattices,
it is seldom stated explicitly as a formal, standalone result;
we present it here for the reader's convenience and later reference.
For a set $I$, let $\mathcal{P}(I)$ denote the lattice of all subsets of $I$ ordered by inclusion.

\begin{pro}\label{semantics}
Let $\mathcal{V}$ be a variety and let $I$ be a countably infinite set.
Then the following statements are equivalent:
\begin{enumerate}[label=\textup{(\alph*)}]
\item The lattice $\mathcal{P}(I)$ embeds into the subvariety lattice $\mathcal{L}(\mathcal{V})$.

\item There exists a family ${(A_i)}_{i\in I}$ of algebras in $\mathcal{V}$
and a family $\{\sigma_i\}_{i \in I}$ of identities such that for all $i, j \in I$,
$A_i$ satisfies $\sigma_j$ if and only if $i \neq j$.
\end{enumerate}
When either (hence both) condition holds,
the lattice $\mathcal{L}(\mathcal{V})$ has cardinality $2^{\aleph_0}$;
moreover, if $\mathcal{W}$ is a subvariety of $\mathcal{V}$ that is contained in $\mathsf{V}(A_i)$ for all $i\in I$,
then the interval $[\mathcal{W}, \mathcal{V}]$ also has cardinality $2^{\aleph_0}$.
\end{pro}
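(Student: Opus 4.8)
The plan is to prove the two implications separately and then read off both cardinality statements from the embedding produced in (b)$\Rightarrow$(a). Throughout, the only feature of the hypothesis in (b) that I will use is the separation property $A_i\models\sigma_j\iff i\ne j$; in particular each $A_i$ violates its own identity $\sigma_i$ and satisfies every other $\sigma_j$.

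For (b)$\Rightarrow$(a), I would attach to each $J\subseteq I$ the subvariety
\[
\mathcal{U}_J=\{A\in\mathcal{V}: A\models\sigma_j\text{ for all }j\in I\setminus J\}
\]
of $\mathcal V$, i.e.\ the class cut out inside $\mathcal V$ by the identities indexed by the complement of $J$. The decisive computation is that $A_i\in\mathcal U_J$ if and only if $i\in J$: indeed $A_i\in\mathcal U_J$ says $A_i\models\sigma_j$ for every $j\notin J$, which by the separation property means $i\ne j$ for every $j\notin J$, i.e.\ $i\in J$. From this the map $J\mapsto\mathcal U_J$ is at once order-preserving and order-reflecting, hence injective: if $\mathcal U_{J_1}\subseteq\mathcal U_{J_2}$ then every $A_i$ with $i\in J_1$ lies in $\mathcal U_{J_2}$, forcing $J_1\subseteq J_2$. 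Since $(I\setminus J_1)\cup(I\setminus J_2)=I\setminus(J_1\cap J_2)$, the defining identity sets simply union up and $\mathcal U_{J_1}\cap\mathcal U_{J_2}=\mathcal U_{J_1\cap J_2}$, so this is a meet-preserving embedding of $\mathcal P(I)$ into $\mathcal L(\mathcal V)$; the companion assignment $J\mapsto\mathsf V(\{A_i:i\in J\})$ is an order-embedding preserving joins (its order-reflection again uses $A_i\not\models\sigma_i$). A single map respecting both operations is not needed: an order-embedding of $\mathcal P(I)$ is all that the cardinality conclusions require.

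For the converse (a)$\Rightarrow$(b), suppose $\Phi\colon\mathcal P(I)\to\mathcal L(\mathcal V)$ is an embedding, so in particular $\Phi$ is order-reflecting. Because $\{i\}\not\subseteq I\setminus\{i\}$ we get $\Phi(\{i\})\not\subseteq\Phi(I\setminus\{i\})$, so I may choose $A_i\in\Phi(\{i\})\subseteq\mathcal V$ with $A_i\notin\Phi(I\setminus\{i\})$; as the latter is a variety there is an identity $\sigma_i$ holding throughout $\Phi(I\setminus\{i\})$ but failing in $A_i$. Then $A_i\not\models\sigma_i$, while for $j\ne i$ monotonicity of $\Phi$ applied to $\{i\}\subseteq I\setminus\{j\}$ gives $A_i\in\Phi(\{i\})\subseteq\Phi(I\setminus\{j\})$, whence $A_i\models\sigma_j$. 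This is exactly the separation property, so (b) holds.

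Finally I would derive the cardinalities. Since $I$ is countably infinite, $|\mathcal P(I)|=2^{\aleph_0}$, so the embedding of (a) gives $|\mathcal L(\mathcal V)|\ge 2^{\aleph_0}$; as each variety is determined by its equational theory and there are only $2^{\aleph_0}$ sets of identities over a countable signature (finite in all cases of interest), the reverse bound holds and $|\mathcal L(\mathcal V)|=2^{\aleph_0}$. For the interval I would use the family $\mathcal Y_J=\mathcal W\vee\mathsf V(\{A_i:i\in J\})$, each of which manifestly lies in $[\mathcal W,\mathcal V]$. The key step—and the place where the hypothesis $\mathcal W\subseteq\mathsf V(A_i)$ for all $i$ is essential—is to check that $\mathcal W\models\sigma_k$ for every $k\in I$: choosing any $j\ne k$ we have $\mathcal W\subseteq\mathsf V(A_j)$ and $A_j\models\sigma_k$, so $\mathcal W\models\sigma_k$. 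Consequently $\mathcal Y_J\models\sigma_k$ iff $k\notin J$, so distinct subsets yield distinct $\mathcal Y_J$, producing $2^{\aleph_0}$ varieties in $[\mathcal W,\mathcal V]$. I expect this last injectivity argument to be the main obstacle: without the uniform containment of $\mathcal W$ in the $\mathsf V(A_i)$ one cannot guarantee that $\mathcal W$ satisfies the separating identities, and the $\sigma_k$ would fail to distinguish the $\mathcal Y_J$; every remaining step is the routine bookkeeping indicated above.
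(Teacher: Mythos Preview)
Your argument is correct and follows essentially the same route as the paper. The only noteworthy differences are refinements on your side: for (a)$\Rightarrow$(b) the paper takes $A_i$ to be the free algebra in $\Phi(\{i\})$ on countably many generators (so that $\mathsf V(A_i)=\Phi(\{i\})$ exactly), whereas your direct choice of a witness $A_i\in\Phi(\{i\})\setminus\Phi(I\setminus\{i\})$ is more elementary and already suffices; for (b)$\Rightarrow$(a) the paper uses only your ``companion'' map $J\mapsto\mathsf V(\{A_i:i\in J\})$ and is content to call it a lattice embedding once it is shown to be an order-embedding, so your $\mathcal U_J$ construction and the caveat about meet- versus join-preservation are additional care not present in the paper; and for the interval statement the paper simply observes that the same map $\varphi(M)=\mathsf V(\{A_m:m\in M\})$ already lands in $[\mathcal W,\mathcal V]$, while your explicit join $\mathcal Y_J=\mathcal W\vee\mathsf V(\{A_i:i\in J\})$ together with the verification that $\mathcal W\models\sigma_k$ for every $k$ handles the empty set cleanly.
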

\begin{proof}
Suppose that $(a)$ holds.
Then there exists an embedding mapping $\varphi\colon \mathcal{P}(I) \to \mathcal{L}(\mathcal{V})$.
This implies that $\varphi$ and $\varphi^{-1}$ are both order-preserving.
For each $i\in I$,
let $A_i$ be a free algebra in the variety $\varphi(\{i\})$ on a countably infinite set of free generators.
By a standard fact (see~\cite[Lemma 11.8]{bs}), $\varphi(\{i\}) = \mathsf{V}(A_i)$.
Since $\varphi(\{i\})$ is not contained in $\varphi(I\setminus\{i\}) = \mathsf{V}(\{A_j \mid j \in I\setminus\{i\}\})$,
there exists an identity $\sigma_i$ that holds in every $A_j$ with $j \neq i$ but fails in $A_i$.
This establishes the condition $(b)$.

Conversely, assume that $(b)$ is true. Consider the mapping
\[
\varphi\colon \mathcal{P}(I) \to \mathcal{L}(\mathcal{V}), \quad M \mapsto \mathsf{V}(\{A_m \mid m \in M\}).
\]
By assumption,
for any $M \subseteq I$ and any $i\in I$,
the variety $\varphi(M)$ satisfies $\sigma_i$ if and only if $i \notin M$.
Consequently, for any $M, N\subseteq I$ we have that $M\subseteq N$ if and only if $\varphi(M)\subseteq \varphi(N)$.
Thus $\varphi$ is a lattice embedding mapping, and so $\mathcal{P}(I)$ embeds into $\mathcal{L}(\mathcal{V})$.

Finally, suppose that the condition $(a)$ or $(b)$ holds,
Then $\mathcal{P}(I)$ embeds into $\mathcal{L}(\mathcal{V})$.
Since $I$ is countably infinite,
it follows immediately that $\mathcal{P}(I)$ has cardinality $2^{\aleph_0}$.
Consequently, $\mathcal{L}(\mathcal{V})$ has cardinality $2^{\aleph_0}$.
Moreover, if $\mathcal{W}$ is a subvariety of $\mathcal{V}$ that is contained in $\mathsf{V}(A_i)$ for all $i\in I$,
then the mapping $\varphi$ above
can be viewed as a mapping from $\mathcal{P}(I)$ to the interval $[\mathcal{W}, \mathcal{V}]$,
whence the interval also has cardinality $2^{\aleph_0}$.
\end{proof}

\section{Preliminaries}
In this section, we establish the groundwork and provide the necessary definitions for the proof of our main theorem.

A \emph{flat semiring} is an ai-semiring $S$ whose multiplicative reduct has a zero $0$
and whose addition satisfies $a+b =0$ for all distinct $a, b \in S$.
Jackson et al.~\cite[Lemma 2.2]{jrz} observed that a semigroup with zero $0$
becomes a flat semiring if and only if it is $0$-cancellative, that is,
$ab=ac\neq 0$ implies $b=c$ and $ab=cb\neq 0$ implies $a=c$ for all $a, b, c\in S$.

The following algebras form an important class of flat semirings.
Let $W$ be a nonempty subset of a free semigroup,
and let $S(W)$ denote the set of all nonempty subwords of words in
$W$ together with a new symbol $0$. Define a binary operation $\cdot$ on $S(W)$ by the rule
\begin{equation*}
\bu\cdot \bv=
\begin{cases}
\bu\bv& \text{if }~\bu\bv\in S_c(W)\setminus \{0\}, \\
0& \text{otherwise.}
\end{cases}
\end{equation*}
Then $(S(W), \cdot)$ forms a commutative semigroup with the zero element $0$.
It is easy to verify that $(S(W), \cdot)$ is $0$-cancellative and so $S(W)$ becomes a flat semiring.
In particular, if~$W$ consists of a single word $\bw$ we shall write $S(W)$ as $S(\bw)$.
If we allow the empty word $\varepsilon$ in this construction, then
the multiplicative reduct is a monoid.
The notation $M(W)$ and $M(\bw)$ are used in this case.
If $a$ is a letter, then $M(a)$ is isomorphic to $S_7$.
In particular, $M(\varepsilon)$, which is denoted by $M_2$ in \cite{sr}, can be embedded into $S_7$.

Let $X^+$ denote the free semigroup over a countably infinite set $X$ of variables.
By distributivity, all ai-semiring terms over $X$ can be expressed as a finite sum of words from words in $X^+$.
An \emph{ai-semiring identity} over $X$ is an
expression of the form
\[
\bu\approx \bv,
\]
where $\bu$ and $\bv$ are ai-semiring terms over $X$.
Let $\bu\approx \bv$ be an ai-semiring identity over an alphabet $\{x_1, x_2, \ldots, x_n\}$.
We say that an ai-semiring \emph{$S$ satisfies $\bu\approx \bv$} or \emph{$\bu\approx \bv$ holds in $S$},
if $\bu(a_1, a_2, \ldots, a_n)=\bv(a_1, a_2, \ldots, a_n)$ for all $a_1, a_2, \ldots, a_n\in S$,
where $\bu(a_1, a_2, \ldots, a_n)$ denotes the result of evaluating $\bu$ in $S$
under the assignment $x_i\rightarrow a_i$, and similarly for $\bv(a_1, a_2, \ldots, a_n)$.

Let $\bp$ be a word in $X^+$. Then $c(\bp)$ denotes the set of all variables that occur in $\bp$.
For an ai-semiring term $\bu=\bu_1+\cdots+\bu_n$ with each $\bu_i \in X^+$,
we shall use $c(\bu)$ to denote the set of all variables that occur in $\bu$, that is,
\[
c(\bu)=\bigcup_{1\leq i \leq n}c(\bu_i).
\]
By \cite[Lemma 1.1]{sr}, $M_2$ satisfies an ai-semiring identity $\bu\approx \bv$
if and only if $c(\bu)=c(\bv)$.

For any ai-semiring terms $\bu$ and $\bv$,
let $\bu \preceq \bv$ denote the identity $\bu + \bv \approx \bv$.
We call $\bu \preceq \bv$ an \emph{inequality}.
Let $S$ be an ai-semiring.
We  write $\bu \preceq_{S} \bv$ if $S$ satisfies the identity $\bu \preceq \bv$.
A word $\bw$ is \emph{minimal} for $S$ if
$\bu \preceq_{S} \bw$ implies $\bu = \bw$ for every word $\bu$.
Equivalently,
$\bw$ is minimal for $S$ if and only if it is an \emph{isoterm} for $S$;
that is, $S$ satisfies no nontrivial identity of the form $\bw \approx \bu$ for any ai-semiring term $\bu$.
If $\bw$ is an isoterm for $S$,
then every word obtained from a subword of $\mathbf{w}$ by renaming its letters
is also an isoterm for $S$.

\begin{lem}[{\cite[Lemma 5.6]{rjzl}}]\label{isosw}
Let $\bw$ be a word and let $S$ be an ai-semiring.
Then $\bw$ is an isoterm for $S$
if and only if the flat semiring $S(\bw)$ belongs to $\mathsf{V}(S)$.
\end{lem}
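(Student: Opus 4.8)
The plan is to pass from membership to identities via Birkhoff's theorem and then argue each direction by direct evaluation in $S(\bw)$. Since $\mathsf{V}(S)$ consists precisely of the algebras satisfying every identity true in $S$, the membership $S(\bw)\in\mathsf{V}(S)$ is equivalent to the assertion that $S(\bw)$ satisfies every identity that $S$ satisfies. Throughout I will use two evaluation rules in $S(\bw)$: for a word $\bp=y_1\cdots y_k$ and an assignment $\phi$ into $S(\bw)$, the value $\bp^{\phi}$ equals the concatenation $\phi(y_1)\cdots\phi(y_k)$ when every $\phi(y_t)$ is a nonzero factor of $\bw$ and this concatenation is again a factor of $\bw$, and $\bp^{\phi}=0$ otherwise; and for a sum $\bu=\bu_1+\cdots+\bu_m$, the value $\bu^{\phi}$ is the common value of the $\bu_i^{\phi}$ when these all coincide, and $0$ otherwise (this is just the join in the flat semilattice, in which $0$ is the top element).

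For the direction ($S(\bw)\in\mathsf{V}(S)\Rightarrow \bw$ is an isoterm for $S$) the key observation is that $\bw$ is always an isoterm for $S(\bw)$ itself. I would verify this by evaluating under the assignment $\kappa$ sending each letter of $\bw$ to the corresponding length-one factor: then $\bw^{\kappa}=\bw$, and if $S(\bw)$ satisfied $\bw\approx\bt$ with $\bt=\bt_1+\cdots+\bt_n$, the sum rule would force $\bt_j^{\kappa}=\bw$ for every $j$, which in turn forces each word $\bt_j$ to equal $\bw$, so the identity is trivial. Consequently, if $S(\bw)\in\mathsf{V}(S)$ and $S$ satisfied a nontrivial identity $\bw\approx\bt$, this identity would be inherited by $S(\bw)$, contradicting the fact just established; hence $\bw$ is an isoterm for $S$.

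The substantive direction is ($\bw$ is an isoterm for $S\Rightarrow S(\bw)\in\mathsf{V}(S)$). Here I would fix an identity $\bu\approx\bv$ satisfied by $S$, with $\bu=\bu_1+\cdots+\bu_m$ and $\bv=\bv_1+\cdots+\bv_n$, and an arbitrary assignment $\phi$ into $S(\bw)$, and show $\bu^{\phi}=\bv^{\phi}$. By symmetry it suffices to prove that $\bu^{\phi}=c\neq 0$ implies $\bv^{\phi}=c$, since the case where both sides vanish then follows. When $\bu^{\phi}=c\neq 0$, the sum rule gives $\bu_i^{\phi}=c$ for all $i$, so no variable of $c(\bu)$ is sent to $0$, and the word-substitution $\sigma$ with $\sigma(x)=\phi(x)$ concatenates each $\bu_i$ to the factor $c$; thus $\sigma(\bu)\approx c$ holds in every ai-semiring by additive idempotency. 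Extending $\sigma$ to the remaining variables of $c(\bv)$ and applying it to $\bu\approx\bv$ then yields $S\models\sigma(\bu)\approx\sigma(\bv)$, whence $S\models c\approx\sigma(\bv)$. Since $c$ is a factor of $\bw$, the stated fact that factors of an isoterm are again isoterms shows that $c$ is an isoterm for $S$; therefore $c\approx\sigma(\bv)$ is trivial, forcing $\sigma(\bv_j)=c$ as a word for every $j$, and translating back through $\phi$ gives $\bv_j^{\phi}=c$ for all $j$, i.e.\ $\bv^{\phi}=c$.

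The main obstacle is the multiplicative zero of $S(\bw)$: a variable in $c(\bv)\setminus c(\bu)$ may be sent to $0$ by $\phi$, and $0$ is the image of no word, so $\sigma$ is not literally the pullback of $\phi$ on such variables. I would resolve this by declaring $\sigma(y)$ to be a single fresh letter $z\notin c(\bw)$ for every such variable $y$. Then any summand $\bv_j$ containing a $0$-variable has $\sigma(\bv_j)$ containing $z$, so $\sigma(\bv_j)$ cannot equal the factor $c$ of $\bw$; combined with the conclusion $\sigma(\bv_j)=c$ forced above, this excludes such summands entirely, leaving every $\bv_j$ free of $0$-variables and guaranteeing $\bv_j^{\phi}=\sigma(\bv_j)=c$. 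The remaining points, namely checking the two evaluation rules and the fact that substitution instances of identities persist in $S$, are routine.
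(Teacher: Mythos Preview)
The paper does not supply its own proof of this lemma; it is quoted verbatim from \cite[Lemma~5.6]{rjzl} and used as a black box. Your argument is correct and is essentially the standard proof: the forward direction reduces to showing that $\bw$ is an isoterm for $S(\bw)$ via the tautological assignment, and the converse pushes an arbitrary identity of $S$ through a word substitution built from a nonzero evaluation in $S(\bw)$, then invokes the isoterm hypothesis on the resulting factor of $\bw$. Your handling of the one genuine subtlety---variables of $\bv$ sent to $0$ by $\phi$, which cannot be represented by a word---via a fresh letter $z\notin c(\bw)$ is exactly the right fix, and the conclusion that every summand $\bv_j$ must then avoid such variables and evaluate to $c$ is sound. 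One small point you leave implicit: for variables $y\in c(\bv)\setminus c(\bu)$ with $\phi(y)\neq 0$ you presumably set $\sigma(y)=\phi(y)$; alternatively, sending \emph{all} variables outside $c(\bu)$ to $z$ works equally well and slightly streamlines the case analysis.
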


Let $\bu$ and $\bv$ be words.
Then \emph{$\bv$ contains a value of $\bu$} if $\varphi(\bu)$ is a subword of $\bv$ for some substitution $\varphi$;
otherwise, \emph{$\bv$ is $\bu$-free}.
Now suppose that $\bv$ is $\bu$-free. Then the flat semiring $S(\bv)$ satisfies the identities
\begin{equation*}\label{sgvfree}
\bu x \approx x \bu \approx \bu,
\end{equation*}
where $x$ is a variable not occurring in $\bu$.
This identity will be abbreviated as $\bu \approx 0$.
Evidently, the flat semiring $S(\bu)$ does not satisfy the identity $\bu \approx 0$.
We now have the following:
\begin{pro}\label{pro26010201}
Let $\{\bw_i\}_{i\in I}$ be a set of words.
Suppose that the words $\{\bw_i\}_{i\in I}$ are pairwise free.
Then for any $i, j \in I$, the flat semiring $S(\bw_i)$
satisfies the identity $\bw_j \approx 0$ if and only if $i \neq j$.
\end{pro}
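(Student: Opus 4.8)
The plan is to obtain the proposition as a direct consequence of the two facts recorded immediately before its statement: (i) if a word $\bv$ is $\bu$-free, then the flat semiring $S(\bv)$ satisfies $\bu \approx 0$; and (ii) the flat semiring $S(\bu)$ never satisfies $\bu \approx 0$. Everything reduces to matching these facts against the hypothesis of pairwise freeness, and it is cleanest to split into the cases $i \neq j$ and $i = j$.

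First I would dispose of the case $i \neq j$, which yields the ``if'' direction. By the assumption that the family $\{\bw_i\}_{i \in I}$ is pairwise free, $\bw_i$ is in particular $\bw_j$-free: no substitution instance of $\bw_j$ is a subword of $\bw_i$. Applying fact (i) with $\bv = \bw_i$ and $\bu = \bw_j$ then gives that $S(\bw_i)$ satisfies $\bw_j \approx 0$, as required.

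Next I would treat the case $i = j$, which yields the contrapositive of the ``only if'' direction. Here $\bw_j = \bw_i$, and fact (ii), instantiated at $\bu = \bw_i$, states exactly that $S(\bw_i)$ fails $\bw_i \approx 0$. Hence $S(\bw_i) \models \bw_j \approx 0$ forces $i \neq j$, and combining the two cases delivers the biconditional.

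Since no step presents a genuine difficulty, the only point meriting care is to keep the direction of freeness straight and, if one wishes to be self-contained, to re-verify the mechanism behind fact (i): under any assignment of the variables of $\bw_j$ into $S(\bw_i)$, either some variable is sent to $0$ --- whence the product is absorbed to $0$ --- or every variable is sent to a nonzero subword of $\bw_i$, in which case the resulting product is precisely a substitution instance of $\bw_j$ and therefore, by the $\bw_j$-freeness of $\bw_i$, not a subword of $\bw_i$, so the product is again $0$. Thus $\bw_j$ evaluates to $0$ under every assignment in $S(\bw_i)$, which is exactly what $\bw_j \approx 0$ asserts for a flat semiring.
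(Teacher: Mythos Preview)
Your proposal is correct and matches the paper's intended argument: the proposition is stated immediately after the two facts you labeled (i) and (ii) with the phrase ``We now have the following,'' and no separate proof is given precisely because the biconditional follows at once by the case split $i\neq j$ versus $i=j$ that you describe. Your optional verification of fact~(i) is a faithful unpacking of the flat-semiring multiplication and adds nothing beyond what the paper leaves implicit.
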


Let $S$ be an ai-semiring.
By Propositions~\ref{semantics} and \ref{pro26010201}, together with Lemma~\ref{isosw},
To demonstrate that the variety $\mathsf{V}(S)$ has
$2^{\aleph_0}$ distinct subvarieties,
it suffices to find an infinite set of pairwise free words that are all isoterms for $S$.

\section{The Cardinality of $[\mathsf{V}(S_7),\mathsf{V}(B_2^1)]$}
In this section, we apply Proposition~\ref{semantics} to prove that
$[\mathsf{V}(S_7),\mathsf{V}(B_2^1)]$ contains $2^{\aleph_0}$ distinct varieties.
Throughout, $\mathbb{N}$ denotes the set of positive integers.

Recall that the Zimin word $\mathbf{z}_n$ is defined inductively by
$\mathbf{z}_1 = x_1$ and $\mathbf{z}_{n+1} = \mathbf{z}_n x_{n+1} \mathbf{z}_n$ for all integers $n \geq 1$.
For each $n \in \mathbb{N}$,
let $\bw_n$ denote the word
\begin{equation}
y_0x_1x_2y_0\left(\prod_{i=1}^n(y_ix_{i+2}y_i)\right)y_{n+1}x_{n+3}x_{n+4}y_{n+1}.
\end{equation}
This word, introduced by Sapir and Volkov~\cite{sv},
consists of $n$ disjoint copies of the Zimin word $\mathbf{z}_2$ placed in the central part,
flanked at the beginning and end by two disjoint copies of the word $yx_1x_2y$.
It underlies many constructions in \cite{jac:uncoutably}
and was also used in \cite[Example 5.7]{rjzl} to prove that
the ai-semiring variety $\mathsf{V}(M(abacdc))$ has $2^{\aleph_0}$ distinct subvarieties.

\begin{lem}\label{wfree}
Let $m, n\in \mathbb{N}$.
Then the flat semiring $S(\bw_n)$ satisfies the identity $\bw_m\approx 0$ if and only if $n \neq m$.
\end{lem}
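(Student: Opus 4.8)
The plan is to reduce the statement to a purely combinatorial claim about words and then prove that claim. By Proposition~\ref{pro26010201} it suffices to verify that the family $\{\bw_n\}_{n\in\mathbb{N}}$ is pairwise free; that is, for all $m\neq n$ the word $\bw_n$ contains no value of $\bw_m$ as a subword. (The diagonal case $m=n$ is automatic, since $S(\bw_n)$ never satisfies $\bw_n\approx 0$.) First I would record the block decomposition $\bw_n=B_0B_1\cdots B_{n+1}$, where $B_0=y_0x_1x_2y_0$ and $B_{n+1}=y_{n+1}x_{n+3}x_{n+4}y_{n+1}$ have interior of length $2$, while each middle block $B_i=y_ix_{i+2}y_i$ (for $1\le i\le n$) has interior of length $1$. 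The structural features to exploit are: the letters $y_0,\dots,y_{n+1}$ are the only repeated letters of $\bw_n$, each occurring exactly twice; the two occurrences of each $y_i$ bracket $B_i$ and are non-nested; and every interior consists of letters that occur only once in $\bw_n$.

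Now suppose, toward a contradiction, that $\varphi(\bw_m)$ is a subword of $\bw_n$ for some substitution $\varphi$ and some $m\neq n$. The crucial first step, and the one I expect to be the main obstacle, is to show that $\varphi$ sends each doubled variable $y_j$ of $\bw_m$ to a single repeated letter of $\bw_n$. Since the two occurrences of $y_j$ in $\bw_m$ are separated by a nonempty interior, the two copies of $\varphi(y_j)$ occur disjointly in $\bw_n$; hence every letter of $\varphi(y_j)$ occurs at least twice in $\bw_n$ and must therefore be one of the $y$-letters. Thus $\varphi(y_j)$ is a subword of $\bw_n$ consisting only of $y$-letters. A short inspection of the block structure shows that such subwords have length at most $2$, and that every all-$y$ subword of length $2$, necessarily of the form $y_iy_{i+1}$ occurring at a block boundary, appears exactly once in $\bw_n$; since $\varphi(y_j)$ must appear at least twice, it has length $1$.

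With this in hand the remaining steps are bookkeeping. Distinct doubled variables of $\bw_m$ must map to distinct $y$-letters of $\bw_n$, for otherwise some $y$-letter would occur at least four times; this gives an injection $\pi$ from $\{0,\dots,m+1\}$ into $\{0,\dots,n+1\}$. Because $\varphi(y_j)=y_{\pi(j)}$ is a single letter whose two occurrences in $\bw_n$ are exactly the images of the two occurrences of $y_j$, the image of the block $B_j$ of $\bw_m$ is precisely the whole block $B_{\pi(j)}$ of $\bw_n$. As $\varphi(\bw_m)$ is the contiguous concatenation of these block-images and each block of $\bw_n$ sits at a unique location, the images must occupy a consecutive run of blocks, forcing $\pi(j)=a+j$ for some $a$.

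Finally I would use the distinguished first and last blocks to pin down $m=n$. The interiors $\varphi(x_1)\varphi(x_2)$ and $\varphi(x_{m+3})\varphi(x_{m+4})$ each have length at least $2$, so the blocks $B_{\pi(0)}$ and $B_{\pi(m+1)}$ of $\bw_n$ must have interior of length $2$; the only such blocks are $B_0$ and $B_{n+1}$. Combined with $\pi(j)=a+j$ and the range constraint $0\le a\le a+m+1\le n+1$, this forces $a=0$ and $m+1=n+1$, hence $m=n$, contradicting $m\neq n$. This establishes that $\{\bw_n\}_{n\in\mathbb{N}}$ is pairwise free, and Proposition~\ref{pro26010201} then yields the lemma. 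The only genuinely delicate point is the length bound on $\varphi(y_j)$ in the second paragraph; once the doubled variables are known to map to single letters, the block-alignment and endpoint arguments are routine.
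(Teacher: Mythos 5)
Your proposal is correct, and its core is the same as the paper's: reduce to showing that $\bw_n$ is $\bw_m$-free for $n\neq m$ (the diagonal case being evident), with the decisive step being that the doubled letters $y_j$ must be sent to single letters of $\bw_n$. The execution differs, though, and in an instructive way. The paper's entire argument is four sentences: it observes that every factor of $\bw_n$ of length greater than $1$ occurs exactly once in $\bw_n$, concludes that any substitution mapping $\bw_m$ into $\bw_n$ ``must send letters to letters,'' and then declares that such a letter-to-letter mapping is ``clearly'' possible only when $n=m$. You reach the single-letter conclusion for the $y_j$'s by a slightly different route (letters of $\varphi(y_j)$ must be repeated, hence $y$-letters, and all-$y$ factors of length $2$ occur only once), and then, instead of asserting letters-to-letters globally, you run a block-alignment argument: blocks map onto blocks, the images form a consecutive run, and the two end blocks (the only ones with interior of length $2$) pin down $a=0$ and $m=n$. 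This extra care actually pays off: the paper's intermediate claim that \emph{all} letters map to letters does not follow directly from its occurrence observation, since a letter occurring once in $\bw_m$ (an interior letter of a middle block) could a priori be sent to a word of length $2$ filling the interior of an end block of $\bw_n$; your positional/block argument rules this out rather than assuming it away. So your proof is a fully rigorous version of what the paper compresses into ``clearly,'' at the cost of about a page of bookkeeping; both are valid, but yours would stand alone without the reader having to reconstruct the alignment argument themselves.
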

\begin{proof}
If $n \neq m$, then the word $\bw_n$ is $\bw_m$-free.
To see this, observe that each subword of $\bw_n$ of length greater than $1$ occurs exactly once in $\bw_n$.
Consequently, any semigroup substitution mapping $\bw_m$ to a subword of $\bw_n$ must send letters to letters.
Such a letter-to-letter mapping is clearly possible only when $n = m$.
\end{proof}

By swapping the positions of $x_1$ and $x_2$, we obtain a copy of $\bw_n$:
\[
\bw_n' = y_0x_2x_1y_0\left(\prod_{i=1}^n(y_ix_{i+2}y_i)\right)y_{n+1}x_{n+3}x_{n+4}y_{n+1}.
\]

\begin{lem}\label{wmwm'}
Let $m, n\in \mathbb{N}$. Then the flat semiring $S(\mathbf{w}_n)$ satisfies the identity
\begin{equation}\label{eqwmwm'}
\mathbf{w}_m \approx \mathbf{w}_m'
\end{equation}
if and only if $n \neq m$.
\end{lem}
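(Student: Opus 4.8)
The plan is to prove the biconditional by treating the two directions separately, reducing the case $n\neq m$ to Lemma~\ref{wfree} and handling the substantive case $n=m$ by exhibiting a single assignment that separates $\mathbf{w}_m$ from $\mathbf{w}_m'$. Throughout I will use the fact, recorded before Proposition~\ref{pro26010201}, that if $\mathbf{w}_n$ is $\mathbf{u}$-free then $S(\mathbf{w}_n)$ satisfies $\mathbf{u}\approx 0$, i.e.\ $\mathbf{u}$ evaluates to $0$ under every assignment.

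First I would dispose of the direction $n\neq m$. By Lemma~\ref{wfree} the word $\mathbf{w}_n$ is $\mathbf{w}_m$-free, so $S(\mathbf{w}_n)$ satisfies $\mathbf{w}_m\approx 0$. Now $\mathbf{w}_m'$ is obtained from $\mathbf{w}_m$ by the bijective renaming that interchanges $x_1$ and $x_2$, so any value of $\mathbf{w}_m'$ is a value of $\mathbf{w}_m$ (precompose the substitution with this renaming). Hence the letter-to-letter argument of Lemma~\ref{wfree} shows that $\mathbf{w}_n$ is also $\mathbf{w}_m'$-free, and therefore $S(\mathbf{w}_n)$ satisfies $\mathbf{w}_m'\approx 0$ as well. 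Since both $\mathbf{w}_m$ and $\mathbf{w}_m'$ evaluate to $0$ under every assignment, $S(\mathbf{w}_n)$ satisfies $\mathbf{w}_m\approx\mathbf{w}_m'$, both sides being identically $0$.

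The main direction is $n=m$, where I must show that $S(\mathbf{w}_n)$ fails $\mathbf{w}_n\approx\mathbf{w}_n'$. For this I would evaluate both words under the identity assignment $\theta$ that sends each variable to the corresponding length-one element of $S(\mathbf{w}_n)$; every letter of $\mathbf{w}_n'$ occurs in $\mathbf{w}_n$, so $\theta$ is well-defined on both sides. The key observation is that, for any word $\mathbf{u}$, the value $\theta(\mathbf{u})$ equals $\mathbf{u}$ if $\mathbf{u}$ is a factor of $\mathbf{w}_n$ and equals $0$ otherwise, because a nonzero product in $S(\mathbf{w}_n)$ must be a factor of $\mathbf{w}_n$ and every prefix of a factor is again a factor. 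Thus $\theta(\mathbf{w}_n)=\mathbf{w}_n\neq 0$, while $\theta(\mathbf{w}_n')=0$. Since $\theta$ separates the two sides, the identity $\mathbf{w}_n\approx\mathbf{w}_n'$ fails in $S(\mathbf{w}_n)$.

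The one point requiring care is the claim $\theta(\mathbf{w}_n')=0$, i.e.\ that $\mathbf{w}_n'$ is not a factor of $\mathbf{w}_n$; this is exactly where the construction is designed to bite. The quickest argument is by length: $\mathbf{w}_n'$ has the same length as $\mathbf{w}_n$ but $\mathbf{w}_n'\neq\mathbf{w}_n$ (they differ in the opening block, $x_1x_2$ versus $x_2x_1$), so the only factor of $\mathbf{w}_n$ of that length, namely $\mathbf{w}_n$ itself, is not $\mathbf{w}_n'$. Alternatively, swapping $x_1$ and $x_2$ turns the opening block $y_0x_1x_2y_0$ into $y_0x_2x_1y_0$, and already the two-letter factor $y_0x_2$ never occurs in $\mathbf{w}_n$, since the only occurrences of $y_0$ are followed by $x_1$ or by $y_1$. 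I expect no further difficulty: the direction $n\neq m$ is Lemma~\ref{wfree} applied to a renamed word, and the direction $n=m$ rests entirely on the single explicit assignment $\theta$.
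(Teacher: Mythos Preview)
Your proposal is correct and follows essentially the same approach as the paper: for $n\neq m$ both sides are reduced to $0$ via Lemma~\ref{wfree} (applied to $\mathbf{w}_m$ and to its renamed copy $\mathbf{w}_m'$), and for $n=m$ the identity substitution separates the two sides. The paper merely asserts the $n=m$ case is ``easy to see,'' whereas you spell out the separating assignment and the length argument explicitly, but the underlying idea is identical.
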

\begin{proof}
It is easy to see that $S(\mathbf{w}_m)$ does not satisfy the identity~\eqref{eqwmwm'}.
If $n\neq m$, then by Lemma~\ref{wfree},
both $\mathbf{w}_m\approx 0$ and $\mathbf{w}_m'\approx 0$ hold in $S(\mathbf{w}_n)$.
Hence $S(\mathbf{w}_n)$ satisfies the identity~\eqref{eqwmwm'}.
\end{proof}

\begin{thm}\label{thm1}
Let $S$ be an ai-semiring.
If $\mathsf{V}(S)$ contains $S_7$ and every word
$\mathbf{w}_n$ $(n\in \mathbb{N})$ is an isoterm for $S$,
then the interval $[\mathsf{V}(S_7), \mathsf{V}(S)]$ has cardinality $2^{\aleph_0}$.
\end{thm}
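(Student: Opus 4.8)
The plan is to apply Proposition~\ref{semantics} with $\mathcal{V} = \mathsf{V}(S)$, $\mathcal{W} = \mathsf{V}(S_7)$, and $I = \mathbb{N}$, verifying condition (b) through a carefully chosen family of algebras and identities. For each $n \in \mathbb{N}$, I would take $A_n = S(\mathbf{w}_n)$ and let $\sigma_n$ be the identity $\mathbf{w}_n \approx \mathbf{w}_n'$. The heart of the argument is then already packaged: Lemma~\ref{wmwm'} tells us precisely that $A_n = S(\mathbf{w}_n)$ satisfies $\sigma_m$ if and only if $n \neq m$, which is exactly the separating condition demanded in (b). (One could equally well use $\sigma_n$ given by $\mathbf{w}_n \approx 0$ and invoke Lemma~\ref{wfree} or Proposition~\ref{pro26010201}; I prefer the symmetric identity $\mathbf{w}_n \approx \mathbf{w}_n'$ since it keeps every $\sigma_n$ a genuine semiring identity in the same signature.)

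Before I can cite Proposition~\ref{semantics}, however, I must confirm its two structural hypotheses. First, each $A_n$ must lie in $\mathcal{V} = \mathsf{V}(S)$. This is where the isoterm hypothesis enters: since every $\mathbf{w}_n$ is assumed to be an isoterm for $S$, Lemma~\ref{isosw} gives immediately that $S(\mathbf{w}_n) \in \mathsf{V}(S)$ for every $n$. Second, to transfer the conclusion from $\mathcal{L}(\mathcal{V})$ to the interval $[\mathsf{V}(S_7), \mathsf{V}(S)]$, the proposition requires that $\mathcal{W} = \mathsf{V}(S_7)$ be contained in $\mathsf{V}(A_n)$ for every $n$. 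I would establish this by showing $S_7 \in \mathsf{V}(S(\mathbf{w}_n))$ for each $n$. The cleanest route is to observe that $S_7 \cong M(a)$, the flat semiring built from a single letter, and to exhibit $M(a)$ as arising from a one-letter subword of $\mathbf{w}_n$; renaming a letter of $\mathbf{w}_n$ yields a one-letter isoterm for $S(\mathbf{w}_n)$, and by Lemma~\ref{isosw} the corresponding flat semiring — a copy of $S_7$ — lies in $\mathsf{V}(S(\mathbf{w}_n))$.

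With both hypotheses verified, Proposition~\ref{semantics} applies verbatim: the family $(A_n)_{n \in \mathbb{N}}$ together with $(\sigma_n)_{n \in \mathbb{N}}$ satisfies (b), so $\mathcal{P}(\mathbb{N})$ embeds into $\mathcal{L}(\mathsf{V}(S))$, and since $\mathsf{V}(S_7) \subseteq \mathsf{V}(A_n)$ for all $n$, the final clause of the proposition yields that the interval $[\mathsf{V}(S_7), \mathsf{V}(S)]$ has cardinality $2^{\aleph_0}$.

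The main obstacle I anticipate is the verification that $\mathsf{V}(S_7) \subseteq \mathsf{V}(S(\mathbf{w}_n))$, i.e. the containment of the lower endpoint $\mathcal{W}$ in every $\mathsf{V}(A_n)$. The separation property and membership $A_n \in \mathsf{V}(S)$ are handed to us directly by the preceding lemmas, but the containment of $\mathsf{V}(S_7)$ is the one point not already isolated as a standalone statement. The argument via $S_7 \cong M(a)$ and Lemma~\ref{isosw} is conceptually short, yet it must be stated with care: one needs that a single variable really is an isoterm for $S(\mathbf{w}_n)$ (which follows since $\mathbf{w}_n$ genuinely contains letters, so no collapsing identity $x \approx \mathbf{u}$ can hold), and one should confirm the identification of $M(a)$ with $S_7$ as noted in the preliminaries. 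Everything else reduces to correctly instantiating Proposition~\ref{semantics}.
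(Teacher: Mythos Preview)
There is a genuine gap at the step you yourself flag as the ``main obstacle'': the containment $\mathsf{V}(S_7)\subseteq \mathsf{V}(S(\mathbf{w}_n))$ is simply false, so with your choice $A_n=S(\mathbf{w}_n)$ the final clause of Proposition~\ref{semantics} cannot be invoked. The multiplicative reduct of $S(\mathbf{w}_n)$ is nilpotent: if $N$ exceeds the length of $\mathbf{w}_n$ then every product of $N$ elements equals $0$, and since $0$ is the additive top in a flat semiring, $S(\mathbf{w}_n)$ satisfies the identity $x_1\cdots x_N + y \approx x_1\cdots x_N$. But $S_7$ fails this identity (substitute each $x_i\mapsto 1$ and $y\mapsto a$), so $S_7\notin \mathsf{V}(S(\mathbf{w}_n))$. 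Your proposed justification also misreads Lemma~\ref{isosw}: that lemma produces $S(a)$, a two-element semiring, not $M(a)\cong S_7$; the empty word, which supplies the multiplicative identity of $S_7$, never appears in $S(\mathbf{w}_n)$ or in any $S(\mathbf{w})$.

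The paper closes this gap by taking $A_n = S_7 \times S(\mathbf{w}_n)$ instead. Then $S_7$ is a homomorphic image of $A_n$, so $\mathsf{V}(S_7)\subseteq \mathsf{V}(A_n)$ is automatic, and $A_n\in\mathsf{V}(S)$ because both factors lie there. The separating identities $\mathbf{w}_m\approx\mathbf{w}_m'$ still work: the multiplicative reduct of $S_7$ is commutative, so $S_7$ satisfies every $\mathbf{w}_m\approx\mathbf{w}_m'$, and hence $A_n$ satisfies $\mathbf{w}_m\approx\mathbf{w}_m'$ if and only if $S(\mathbf{w}_n)$ does, which by Lemma~\ref{wmwm'} happens exactly when $n\neq m$. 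With this modification your outline goes through verbatim.
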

\begin{proof}
Since $\mathsf{V}(S)$ contains $S_7$,
it follows immediately that $\mathsf{V}(S_7)$ is a subvariety of $\mathsf{V}(S)$,
and so the interval $[\mathsf{V}(S_7), \mathsf{V}(S)]$ is well-defined.
For each $n\in \mathbb{N}$,
because $\mathbf{w}_n$ is an isoterm for $S$,
Lemma~\ref{isosw} implies that the flat semiring $S(\mathbf{w}_n)$
lies in $\mathsf{V}(S)$.
Let $A_n$ denote the direct product of $S_7$ and $S(\mathbf{w}_n)$.
Then the variety $\mathsf{V}(A_n)$ is a member of the interval $[\mathsf{V}(S_7), \mathsf{V}(S)]$.
It is easy to see that $S_7$ satisfies the identity $\mathbf{w}_m \approx \mathbf{w}_m'$ for all $m\geq 1$,
since the multiplicative reduct of $S_7$ is commutative.
By Lemma~\ref{wmwm'}, for any $m, n\in \mathbb{N}$,
the algebra $A_n$ satisfies $\mathbf{w}_m \approx \mathbf{w}_m'$ if and only if $n\neq m$.
Therefore, Proposition~\ref{semantics} shows that
the interval $[\mathsf{V}(S_7),\mathsf{V}(S)]$
contains $2^{\aleph_0}$ distinct varieties.
\end{proof}

Ren et al.~\cite[Example 5.7]{rjzl} showed that every word $\mathbf{w}_n$ is an isoterm for the flat semiring $M(abacdc)$.
Since $M(abacdc)$ contains a copy of $S_7$,
Theorem~\ref{thm1} immediately yields the following corollary.

\begin{cor}
The interval $[\mathsf{V}(S_7),\mathsf{V}(M(abacdc))]$ has cardinality $2^{\aleph_0}$.
\end{cor}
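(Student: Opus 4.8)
The plan is to apply Theorem~\ref{thm1} with $S = M(abacdc)$, so that the entire argument reduces to verifying the two hypotheses of that theorem for this particular flat semiring. Both hypotheses are essentially already on record in the excerpt, so the proof will be short and amounts to a direct specialization.

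First I would check that $\mathsf{V}(M(abacdc))$ contains $S_7$. Recall that the elements of $M(abacdc)$ are the nonempty subwords of $abacdc$, together with the empty word $\varepsilon$ (which serves as the multiplicative identity) and the symbol $0$. I would single out the three-element subset $\{0, \varepsilon, a\}$ and verify that it is closed under both operations. The product $a\cdot a$ would require $aa$ to be a subword of $abacdc$, but the two occurrences of $a$ in $abacdc$ are not adjacent, so $a\cdot a = 0$; every remaining product involves $0$ or $\varepsilon$ and therefore stays inside the subset, while additively any two distinct elements sum to $0$. Hence this subset is a subalgebra isomorphic to $M(a)$, which the excerpt records as being isomorphic to $S_7$. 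Since varieties are closed under subalgebras, it follows that $S_7 \in \mathsf{V}(M(abacdc))$, and in particular the interval $[\mathsf{V}(S_7), \mathsf{V}(M(abacdc))]$ is well-defined.

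Second, I would invoke the result of Ren et al.~\cite[Example 5.7]{rjzl}, which states precisely that every word $\mathbf{w}_n$ with $n \in \mathbb{N}$ is an isoterm for $M(abacdc)$. With both hypotheses of Theorem~\ref{thm1} now in hand, the theorem applies verbatim and yields that $[\mathsf{V}(S_7), \mathsf{V}(M(abacdc))]$ has cardinality $2^{\aleph_0}$.

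The argument has no genuinely difficult step: the cardinality content is supplied entirely by Theorem~\ref{thm1}, and ultimately by the pairwise-free family $\{\mathbf{w}_n\}$ together with the distinguishing identities $\mathbf{w}_m \approx \mathbf{w}_m'$. The only point requiring a moment's care is confirming the copy of $S_7$ inside $M(abacdc)$, and even that is immediate once one notices that $abacdc$ contains no repeated adjacent letter, so that $a\cdot a$ collapses to $0$. I therefore expect the main (and only) obstacle to be the cited isoterm claim, which is external to this corollary; granting it, the result is a routine specialization of Theorem~\ref{thm1}.
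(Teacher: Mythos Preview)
Your proposal is correct and follows exactly the paper's approach: apply Theorem~\ref{thm1} to $S=M(abacdc)$, using that $M(abacdc)$ contains a copy of $S_7$ and that every $\mathbf{w}_n$ is an isoterm for $M(abacdc)$ by \cite[Example~5.7]{rjzl}. The paper simply asserts the existence of a copy of $S_7$ without spelling out the subalgebra $\{0,\varepsilon,a\}$, so your version is slightly more detailed but otherwise identical.
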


To prove that the interval $[\mathsf{V}(S_7),\mathsf{V}(B_2^1)]$ has cardinality $2^{\aleph_0}$,
by Theorem~\ref{thm1}, it is enough to show that every word $\mathbf{w}_n$ is an isoterm for $B_2^1$.
For this purpose, we first establish a sufficient condition for every $\mathbf{w}_n$ to be an isoterm for a given ai-semiring.

Let $\mathbf{w}$ be a word in $X^+$ and let $L$ be a subset of $X$.
For any word $\mathbf{w} \in X^{+}$ and any subset $L \subseteq X$, we denote by $\mathbf{w}(L)$ the word obtained from $\mathbf{w}$ by deleting all variables not belonging to $L$.
Equivalently, $\mathbf{w}(L) = \varphi_L(\mathbf{w})$,
where $\varphi_L \colon X\to X^{*}$ is a monoid substitution defined by
\[
\varphi_L(x) =
\begin{cases}
x, & x \in L,\\[4pt]
\varepsilon, & \text{otherwise}.
\end{cases}
\]
In particular,
if $L = c(\mathbf{u}) \setminus \{x\}$ for some $x \in X^{+}$,
we write $\mathbf{u}_x$ for $\mathbf{u}(L)$.

Suppose that $S$ is an ai-semiring with a multiplicative identity $1$.
For any proper subset $L$ of $c(\mathbf{v})$,
if $\mathbf{u}(L)\neq \varepsilon$, then
$\mathbf{u}(L) \preceq_{S} \mathbf{v}(L)$.
If $\mathbf{u}(L)=\varepsilon$,
we write $\varepsilon \preceq_{S} \mathbf{v}(L)$ to
mean that $1 \leq \varphi(\mathbf{v}(L))$ holds for every substitution $\varphi\colon X \to S$.

\begin{pro}\label{wns}
Let $S$ be an ai-semiring with a multiplicative identity $1$.
If $M_2 \in \mathsf{V}(S)$ and $xyxztz$ is an isoterm for $S$,
then every $\bw_n$ $(n \in \mathbb{N})$ is an isoterm for $S$.
\end{pro}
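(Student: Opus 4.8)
The plan is to verify the minimality formulation of ``isoterm'' head-on: I take an arbitrary word $\bu$ with $\bu \preceq_{S} \bw_n$ and prove that $\bu = \bw_n$. Two mechanisms are available. Since $M_2 \in \mathsf{V}(S)$, the semiring $M_2$ satisfies the identity $\bu + \bw_n \approx \bw_n$ coming from $\bu \preceq_{S} \bw_n$, whence $c(\bu) \subseteq c(\bw_n)$ at once. Since $S$ has a multiplicative identity $1$, for every proper subset $L \subsetneq c(\bw_n)$ substituting $1$ for the variables outside $L$ turns $\bu \preceq_{S} \bw_n$ into the restricted inequality $\bu(L) \preceq_{S} \bw_n(L)$; and whenever $\bw_n(L)$ is a renaming of a subword of $xyxztz$, it is an isoterm for $S$, so this restricted inequality forces $\bu(L) = \bw_n(L)$ provided $\bu(L) \neq \varepsilon$. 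The whole argument is then a bookkeeping of well-chosen sets $L$.

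First I would record the relevant restrictions. The two-letter sets $\{x_1, x_2\}$ and $\{x_{n+3}, x_{n+4}\}$ give renamings of the isoterm $xy$; the sets $\{y_0, x_1\}$, $\{y_i, x_{i+2}\}$ $(1 \le i \le n)$ and $\{y_{n+1}, x_{n+3}\}$ give renamings of the isoterm $xyx$; and the four-letter sets $\{y_i, x_{i+2}, y_j, x_{j+2}\}$ together with their flanking analogues $\{y_0, x_1, y_j, x_{j+2}\}$ and $\{y_i, x_{i+2}, y_{n+1}, x_{n+3}\}$ give renamings of $xyxztz$ itself. Reading $c(\bu(L)) = L$ off each resulting equality $\bu(L) = \bw_n(L)$ shows that if one variable of a given $L$ occurs in $\bu$ then all of them do; as these sets link every variable of $\bw_n$ into a single connected family and $\bu$ is a nonempty word, it follows that $c(\bu) = c(\bw_n)$. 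The same equalities fix the multiplicities: the restriction to $\{y_i, x_{i+2}\}$ being exactly $y_i x_{i+2} y_i$ forces each $y_i$ to occur twice and each $x_j$ once in $\bu$, with the two occurrences of each $y_i$ enclosing its interior letter.

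With content and multiplicities settled, the four-letter restrictions perform the assembly. Each equality $\bu(L) = \bw_n(L)$ of the pattern $abacdc$ simultaneously records that the two occurrences of one $y$ enclose its $x$ and lie wholly on one side of the two occurrences of the other $y$; letting the pair of blocks range over all choices shows that the bracket pairs are linearly ordered as $y_0, y_1, \dots, y_{n+1}$, never interleave, and enclose only their designated letters ($x_1 x_2$ inside $y_0$, a single $x_{i+2}$ inside a middle $y_i$, and $x_{n+3} x_{n+4}$ inside $y_{n+1}$), the internal order being supplied by the two-letter restrictions. Since these same restrictions force $\bu$ to begin with $y_0$ and end with $y_{n+1}$, stacking the nested, non-interleaving, internally determined blocks reconstructs $\bu$ as $\bw_n$.

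I expect the main obstacle to be exactly this final passage from the family of projections $\bu(L) = \bw_n(L)$ to the single global word $\bu = \bw_n$, because the stock of usable restrictions is limited. Restricting to $\{y_i, x_j\}$ with $x_j$ outside block $i$ yields the pattern $aab$, which is not a subword of $xyxztz$ and therefore contributes no information; and the two flanking blocks $y_0 x_1 x_2 y_0$ and $y_{n+1} x_{n+3} x_{n+4} y_{n+1}$ themselves have pattern $abca$, again not a subword of $xyxztz$. Hence the positions of $x_1, x_2$ (and $x_{n+3}, x_{n+4}$) are not visible from any single restriction and must be triangulated by combining the order $\{x_1, x_2\}$ with the four-letter restrictions $\{y_0, x_1, y_j, x_{j+2}\}$ and $\{y_0, x_2, y_j, x_{j+2}\}$ that place them relative to every later block. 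Verifying for each bracket pair that no occurrence of any other variable can slip between its two $y$'s---so that nothing is lost in going from the local projections to the global word---is the crux, and is the point at which the hypothesis that $xyxztz$ is an isoterm for $S$ is used decisively.
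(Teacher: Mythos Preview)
Your approach is correct and follows the same overall strategy as the paper: pass from $\bu \preceq_S \bw_n$ to restrictions $\bu(L) \preceq_S \bw_n(L)$ via the multiplicative identity, and exploit that $\bw_n(L)$ is an isoterm whenever it is a renaming of a subword of $xyxztz$. The difference is purely in the bookkeeping. The paper first nails down the entire $x$-skeleton: for \emph{every} pair $i<j$ it shows $\bu(\{x_i,x_j\})=x_ix_j$, disposing of the $\varepsilon$ possibility by observing that $\varepsilon\preceq_S x_ix_j$ would yield $x_i\preceq_S x_ix_jx_i$, contradicting that $xyx$ is an isoterm. With $\bu(\{x_1,\dots,x_{n+4}\})=x_1\cdots x_{n+4}$ in hand, non-emptiness is automatic and the $xyxztz$-isoterm property needs to be applied only to \emph{adjacent} block pairs $\{y_k,y_{k+1},x_{k+2},x_{k+3}\}$ (together with the two flanking variants for $y_0$ and $y_{n+1}$); equalities along this single chain already force $\bu=\bw_n$. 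Your route reaches the same destination via a connectivity argument for the content and an all-pairs comparison of $y$-blocks, followed by separate triangulation of $x_1,x_2,x_{n+3},x_{n+4}$. That works, but the paper's preliminary $x$-skeleton step is precisely what dissolves the assembly difficulty you flag as the crux, and it cuts the number of restrictions needed from quadratic to linear in~$n$.
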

\begin{proof}
Let $n \in \mathbb{N}$.
To show that $\bw_n$ is an isoterm for $S$,
It suffices to prove that $\bw_n$ is minimal for $S$.
Suppose that $\bu\preceq_{S} \bw_n$ for some word $\bu$.
Since $M_2\in \mathsf{V}(S)$,
it follows that
\[
c(\bu)\subseteq c(\bw_n)=\{x_1, x_2, \ldots, x_{n+4}\}\cup\{y_0, y_1, \ldots, y_{n+1}\}.
\]

For any $1\leq i<j\leq n+4$, because $S$ contains a multiplicative identity $1$,
we have that $\bu(\{x_i,x_{j}\})\preceq_{S} \bw_n(\{x_i,x_{j}\})=x_ix_{j}$.
Since $xyxztz$ is an isoterm for $S$,
it follows that $x_ix_jx_i$ and $x_ix_j$ are both isoterms for $S$,
and so either $\bu(\{x_i,x_j\})=\varepsilon$ or $\bu(\{x_i,x_j\})=x_ix_j$.
If $\bu(\{x_i,x_j\})=\varepsilon$,
then $\varepsilon \preceq_{S} x_ix_j$, and so $x_i \preceq_{S} x_ix_jx_i$, a contradiction.
Thus $\bu(\{x_i, x_j\})=x_ix_j$.
Therefore,
\begin{equation}\label{eq3.3}
\mathbf{u}(\{x_1, x_2, \ldots, x_{n+4}\}) = x_1 x_2 \cdots x_{n+4}.
\end{equation}

Next, we show that each $y_i$ occurs exactly twice in $\mathbf{u}$ in the same order as in $\mathbf{w}_n$,
and moreover, the $x_j$'s appearing between the two occurrences of each $y_i$ are precisely those in $\mathbf{w}_n$.
Observe that
\begin{equation}\label{eq3.4}
\begin{aligned}
\mathbf{u}(\{y_0, y_1, x_i, x_3\}) &\preceq_S \mathbf{w}_n(\{y_0, y_1, x_i, x_3\}), \\
\mathbf{u}(\{y_n, y_{n+1}, x_{n+2}, x_j\}) &\preceq_S \mathbf{w}_n(\{y_n, y_{n+1}, x_{n+2}, x_j\}), \\
\mathbf{u}(\{y_k, y_{k+1}, x_{k+2}, x_{k+3}\}) &\preceq_S \mathbf{w}_n(\{y_k, y_{k+1}, x_{k+2}, x_{k+3}\}),
\end{aligned}
\end{equation}
where $i \in \{1, 2\}$, $j \in \{n+3, n+4\}$, and $k \in \{1, 2, \ldots, n-1\}$.

By~\eqref{eq3.3}, all left-hand sides in~\eqref{eq3.4} are nonempty words, while the corresponding right-hand sides are
$y_0 x_i y_0 y_1 x_3 y_1$, $y_n x_{n+2} y_n y_{n+1} x_j y_{n+1}$ and $y_k x_{k+2} y_k y_{k+1} x_{k+3} y_{k+1}$, respectively.
These words differ from $xyxztz$ only by names of the letters.
Since $xyxztz$ is an isoterm for $S$, the two sides of each inequality in~\eqref{eq3.4} must coincide.
Hence $\mathbf{u} = \mathbf{w}_n$, and so $\mathbf{w}_n$ is minimal for $S$, as required.
\end{proof}

By \cite[Lemma 5.3]{dgv}, every Zimin word $\mathbf{z}_n$ $(n \in \mathbb{N})$ is minimal for the ai-semiring $B_2^1$.
Using this, we now establish the following corollary.

\begin{cor}\label{coro26010305}
Every $\bw_n$ $(n \in \mathbb{N})$ is an isoterm for the ai-semiring $B_2^1$.
\end{cor}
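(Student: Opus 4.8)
The plan is to deduce the corollary from Proposition~\ref{wns} applied to $S=B_2^1$, which reduces everything to three checks: that $B_2^1$ has a multiplicative identity, that $M_2\in\mathsf{V}(B_2^1)$, and that $xyxztz$ is an isoterm for $B_2^1$. The first is immediate, since the identity matrix $1$ is a multiplicative identity of $B_2^1$. For the second I would use the chain of embeddings recorded earlier: $M_2=M(\varepsilon)$ embeds into $S_7$, and $S_7$ embeds into $B_2^1$, so $M_2$ is (isomorphic to) a subalgebra of $B_2^1$ and hence lies in $\mathsf{V}(B_2^1)$. All of the real work is in the third point.

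To show $xyxztz$ is minimal for $B_2^1$, I would take a word $\bu$ with $\bu\preceq_{B_2^1} xyxztz$ and prove $\bu=xyxztz$. Since $M_2\in\mathsf{V}(B_2^1)$, one first gets $c(\bu)\subseteq\{x,y,z,t\}$. Using the multiplicative identity to substitute $1$ for the absent variables, one may pass to sub-alphabets and obtain $\bu(L)\preceq_{B_2^1}(xyxztz)(L)$ for each $L$. Taking $L=\{x,y\},\{z,t\},\{y,z,t\},\{x,y,t\}$ produces the right-hand sides $xyx,\ ztz,\ yztz,\ xyxt$. Each is a renaming of a factor of a Zimin word: $xyx$ and $ztz$ are copies of $\mathbf{z}_2$, while $yztz$ and $xyxt$ are the renamed factors $x_2x_1x_3x_1$ and $x_1x_2x_1x_3$ of $\mathbf{z}_3$. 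Hence, by the minimality of Zimin words (\cite[Lemma~5.3]{dgv}) together with the fact that renamed factors of isoterms are isoterms, all four are isoterms for $B_2^1$. A one-line evaluation rules out the degenerate case $\bu(L)=\varepsilon$ (for instance $x=1,\,y=a$ sends $xyx$ to $a$, which does not lie above $1$, so $\varepsilon\not\preceq_{B_2^1}xyx$), and then minimality forces each $\bu(L)$ to equal the corresponding right-hand side.

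These four equalities pin down $\bu$ up to a single ambiguity: they force $\bu$ to have letter multiset $\{x,x,y,z,z,t\}$ arranged so that $x\ldots y\ldots x$, $z\ldots t\ldots z$, with $y$ before the first $z$ and $t$ after the second $x$; the only remaining freedom is the relative order of the second $x$ and the first $z$, so $\bu\in\{xyxztz,\,xyzxtz\}$. To eliminate $xyzxtz$ I would abandon sub-alphabet projections and evaluate directly in $B_2^1$: under $x=a,\,y=b,\,z=b,\,t=a$ one gets $xyxztz\mapsto(ab)^3=ab$ while $xyzxtz\mapsto 0$, and since $0\not\le ab$ in the additive order of $B_2^1$, the inequality $xyzxtz\preceq_{B_2^1}xyxztz$ fails. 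Thus $\bu=xyxztz$, so $xyxztz$ is an isoterm, and Proposition~\ref{wns} yields the corollary.

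I expect the last step to be the main obstacle, and it explains why Zimin minimality alone cannot close the argument. The two blocks $xyx$ and $ztz$ of $xyxztz$ are distance-$2$ repeats on \emph{different} letters, whereas in every Zimin word all distance-$2$ repeats occur at odd positions and therefore use the single innermost variable; consequently $xyxztz$ is not a renaming of any factor of any Zimin word, and it cannot be obtained purely from the cited lemma via the factor-restriction rule. The $M_2$/sub-alphabet machinery likewise cannot distinguish $xyxztz$ from the interleaving $xyzxtz$, since both satisfy all length-preserving projection constraints, so the decisive move is the explicit evaluation in $B_2^1$ that breaks the tie.
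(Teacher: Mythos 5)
Your proposal is correct and follows essentially the same route as the paper's proof: reduction via Proposition~\ref{wns}, sub-alphabet projections combined with Zimin-word minimality from \cite[Lemma 5.3]{dgv} to force $\bu\in\{xyxztz,\,xyzxtz\}$, elimination of the empty-projection case by a substitution into $\{1,a\}$, and the identical final evaluation $x=t=a$, $y=z=b$ to exclude $xyzxtz$. The only cosmetic difference is that you use four projections where the paper uses just the two deletions $\bu_x$ and $\bu_z$ (your $\{x,y\}$ and $\{z,t\}$ projections are redundant), and your closing observation that $xyxztz$ itself is not a renamed factor of any Zimin word correctly explains why the direct evaluation step is unavoidable.
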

\begin{proof}
It is easy to see that $B_2^1$ is an ai-semiring with the multiplicative identity $1$.
Also, $B_2^1$ contains a copy of $M_2$, and so $M_2\in \mathsf{V}(B_2^1)$.
By Proposition~\ref{wns}, it suffices to prove that
the word $xyxztz$ is an isoterm for $B_2^1$.

Now assume that $\bu\preceq_{B_2^1} xyxztz$ for some nonempty word $\bu$.
Since $M_2 \in \mathsf{V}(B_2^1)$, we obtain that $c(\bu) \subseteq c(xyxztz) = \{x, y, z, t\}$.
Since $B_2^1$ has a multiplicative identity,
it follows that $\bu_x \preceq_{B_2^1} yztz$.
Notice that $yztz$ differs from a subword of $\mathbf{z}_3$ only by names of the letters.
Hence $yztz$ is minimal for $B_2^1$,
and so either $\bu_x=\varepsilon$ or $\bu_x=yztz$.

If $\bu_x=\varepsilon$, consider the substitution $\varphi\colon X \to B_2^1$ defined by
$\varphi(y) = a$, $\varphi(z) = \varphi(t) = 1$.
Then $1 \leq \varphi(yztz) = a\cdot1\cdot 1\cdot 1=a$, a contradiction.
Hence $\bu_x = yztz$.
A similar argument shows that $\bu_z = xyxt$.
Consequently, $\bu$ must be $xyxztz$ or $xyzxtz$.

Now suppose that $\bu = xyzxtz$.
Let $\psi\colon X \to B_2^1$ be a substitution defined by
$\psi(x) = \psi(t) = a$ and $\psi(y) = \psi(z) = b$.
Then
\[
0=\psi(xyzxtz)=\psi(\bu) \leq \psi(xyzxtz) = ab,
\]
which is impossible.
Thus $\bu = xyxztz$, and so $xyxztz$ is an isoterm for $B_2^1$.
\end{proof}

\begin{proof}[Proof of Theorems~$\ref{maintheorem}$ and $\ref{thm26010401}$]
By Theorem~\ref{thm1} and Corollary~\ref{coro26010305},
the proof of Theorem~$\ref{maintheorem}$ is complete.
Recall that a word $\bw$ is an isoterm for a semigroup $S$ if
$S$ satisfies no nontrivial identity of the form $\bw \approx \bq$ for any word $\bq$.
By \cite[Lemma 3.3.32]{combinatorial-algebra}, $\bw$ is an isoterm for $S$
if and only if the semigroup $S(\bw)$ lies in $\mathsf{V}(S)$.
From the proof of Theorem~\ref{thm1}, one can deduce Theorem~\ref{thm26010401}.
\end{proof}

\section{Conclusion}
We have proved that the interval $[\mathsf{V}(S_7), \mathsf{V}(B_2^1)]$ contains $2^{\aleph_0}$ distinct varieties.
This result further elucidates the structure of the subvariety lattice of $\mathsf{V}(B_2^1)$
and illustrates the rich complexity inherent in this hierarchy.

To date, it is known that the subvariety lattice of $\mathsf{V}(\mathbf{Rel}(2))$
contains a chain of consecutive intervals, each of which has cardinality $2^{\aleph_0}$:
\[
[\mathbf{T},\mathsf{V}(S_7)],\quad
[\mathsf{V}(S_7),\mathsf{V}(B_2^1)],\quad
[\mathsf{V}(B_2^1),\mathsf{V}(\Sigma_7)],\quad\text{and}\quad
[\mathsf{V}(\Sigma_7),\mathsf{V}(\mathbf{Rel}(2))],
\]
where $\mathbf{T}$ denotes the trivial variety.

While there exist  finitely based finite semigroups whose varieties have
$2^{\aleph_0}$ distinct subvarieties (for instance, the semigroup $M(aba)$; see~\cite[Theorem 3.2]{jac:uncoutably}),
it had been unknown whether a finitely based finite ai-semiring
whose variety could have $2^{\aleph_0}$ distinct subvarieties.
Very recently we have discovered such an example with exactly three elements;
the details will be presented in a separate paper.

\subsection*{Acknowledgment}
The authors thank Professor Marcel Jackson for his insightful suggestions on this paper.

\section*{Declarations}
\noindent
\textbf{Ethical approval}
Not applicable.

\noindent
\textbf{Competing interests}
Not applicable.

\noindent
\textbf{Authors' contributions}
Not applicable.

\noindent
\textbf{Availability of data and materials}
Not applicable.

\noindent
\textbf{Funding}
The research of the second author was supported by National Natural Science Foundation of China
under grant no.~12371024 and no.~12571020.

\end{document}